\documentclass[11pt,a4paper]{amsart}
\usepackage{latexsym}
\usepackage{graphicx}
\usepackage{subfig}
\usepackage{caption}
\usepackage{float}
\usepackage{enumerate}
\usepackage[top=3.2cm,bottom=3.8cm,left=3cm,right=2cm]{geometry}
\usepackage{mathrsfs}
\usepackage{amssymb}
\usepackage{amsbsy}
\usepackage[colorlinks,linkcolor=blue,citecolor=blue,pagebackref]{hyperref}

\newtheorem{theorem}{Theorem}[section]
\newtheorem{lemma}[theorem]{Lemma}

\theoremstyle{definition}

\theoremstyle{remark}
\newtheorem{remark}[theorem]{Remark}

\numberwithin{equation}{section}

\newcommand{\R}{\mathbb{R}}
\newcommand{\Sph}{\mathbb{S}}
\newcommand{\eps}{\varepsilon}

\newcommand{\Cov}{\operatorname{Cov}}
\newcommand{\Var}{\operatorname{Var}}

\newcommand{\Hess}{\nabla^2}
\newcommand{\HS}{\mathrm{HS}}
\newcommand{\Id}{\mathrm{Id}}

\begin{document}
	
	\begin{center}
		{\large\bf A uniform bound in the dimensional Brunn--Minkowski inequality for even log-concave measures}
	\end{center}
	
	\vskip 15pt
	\begin{center}
		{\small\bf Kai-Wen Yang}\\~~ \\
		\small{School of Mathematical Sciences, Key Laboratory of Intelligent Computing and Applications (Ministry of Education), Tongji University, Shanghai, 200092, China}
	\end{center}
	
	\vskip 5pt
	\begin{NoHyper}
		\footnotetext{E-mail addresses:  yangkaiwen@tongji.edu.cn.}
	\end{NoHyper}
	
	\begin{center}
		\begin{minipage}{14cm}
			{{\bf Abstract:}
				For every $n\ge 2$, we prove that there exists an exponent $p_n$ such that, for every even log-concave probability measure $\mu$ on $\mathbb R^n$, all nonempty symmetric convex sets $K,L\subseteq\mathbb R^n$, and all $\lambda\in[0,1]$,
				\[
				\mu(\lambda K+(1-\lambda)L)^{p_n}
				\ge
				\lambda\mu(K)^{p_n}+(1-\lambda)\mu(L)^{p_n},
				\]
				where 
				\[
				p_n\ge \frac{c}{n^2\ln n}
				\]
				for some absolute constant $c>0$.}
			
			\vskip 5pt{{\bf 2020 Mathematics Subject Classification:} 52A40, 60E15.}
			
			\vskip 5pt{{\bf Keywords:} Brunn--Minkowski inequality; log-concave measures; convex bodies.}
		\end{minipage}
	\end{center}
	
	\vskip 20pt
	
	\section{\bf Introduction}
	\vskip 5pt
	
	The setting for this paper is the $n$-dimensional Euclidean space, $\mathbb{R}^n$. Let $V_n$ be the $n$-dimensional Lebesgue measure. The classical Brunn–Minkowski inequality asserts that for all Borel sets $K, L \subseteq \mathbb{R}^n$ and all $\lambda \in [0,1]$, 
	$$
	V_n(\lambda K+(1-\lambda) L)^\frac{1}{n} \geq \lambda V_n(K)^\frac{1}{n}+ (1-\lambda) V_n(L)^\frac{1}{n}.
	$$
	Here, $\lambda K+(1-\lambda) L=\left\{\lambda x+(1-\lambda) y: x \in K, y \in L\right\}$ is the Minkowski combination of $K$ and $L$. The Brunn--Minkowski inequality says that the $n$-dimensional Lebesgue measure is $\frac{1}{n}$-concave with respect to Minkowski combinations for Borel sets.
	
	The Brunn--Minkowski inequality is one of the cornerstones of convex geometry. See, e.g., books by Gardner \cite{Gardner-2006} and Schneider \cite{Schneider-book}.  
	It is also closely related to
	many other fundamental inequalities, such as the isoperimetric inequality, the Prékopa-Leindler inequality, the Sobolev inequality and the Brascamp-Lieb inequality. See, e.g., Barthe \cite{Barthe} and Bobkov and Ledoux \cite{Bobkov1,Bobkov2}. We refer to the excellent survey by Gardner \cite{Gardner-2002} for a more comprehensive account.
	
	Extending this dimensional concavity property beyond Lebesgue measure is a central topic in modern convex geometry. Recall that a Borel measure $\mu$ on $\R^n$ is log-concave if it has density $e^{-V}$, where $V$ is convex. The dimensional Brunn–Minkowski	conjecture asks whether for every even log-concave probability measure $\mu$ on $\R^n$, 
	\begin{equation}\label{eq:logconcave}
		\mu(\lambda K+(1-\lambda) L)^\frac{1}{n} \geq \lambda \mu(K)^\frac{1}{n}+ (1-\lambda) \mu(L)^\frac{1}{n}
	\end{equation}
	for all nonempty symmetric convex sets $K, L \subseteq \mathbb{R}^n$ and all $\lambda \in [0,1]$?
	
	This conjecture was first posed by Gardner and Zvavitch \cite{Gardner-Zvavitch-2010} for Gaussian measures and later formulated in this generality by Colesanti, Livshyts and Marsiglietti \cite{CLM17}. It is confirmed by Eskenazis and Moschidis \cite{Eskenazis-Moschidis-2021} for Gaussian measures, and  by Cordero-Erausquin and Rotem \cite{Cordero-Rotem-2023} for all rotationally invariant log-concave measures. Livshyts, Marsiglietti, Nayar and Zvavitch \cite[Proposition 1]{LMNZ17} showed that this conjecture follows from the logarithmic Brunn–Minkowski conjecture posed by Böröczky, Lutwak, Yang and Zhang \cite{BLYZ12}, so it is valid in dimension $n=2$ by Böröczky et al. \cite{BLYZ12}, and for unconditional (that is, symmetric with
	respect to every coordinate hyperplane) convex bodies and unconditional log-concave probability measures by Saroglou \cite{Saroglou2015,Saroglou2016}. See \cite{Aishwarya-Li-2025,Aishwarya-Rotem-2026,Cordero-Erausquin-Eskenazis-preprint,MMRR-preprint} for the entropic or functional forms of this conjecture.
	
	The dimensional Brunn--Minkowski conjecture remains open for general even log-concave measures in dimensions \(n\geq 3\). In 2023, Livshyts \cite{Livshyts-2023} proved an inequality of the form \eqref{eq:logconcave} with a universal lower bound \(\frac{1}{n^{4+o(1)}}\) on the exponent. Very recently, Eskenazis, Giannopoulos and Tziotziou \cite{AAN} showed that one may take this exponent as \(\frac{c}{n^3\ln n}\), where \(c>0\) is an absolute constant. 
	In this paper, we improve the lower bound on the exponent by a factor of \(n\). Our main result is the following.
	
	\begin{theorem}\label{main}
		Let $\mu$ be an even log-concave probability measure on $\R^n$, $n\ge2$. Then, for all nonempty symmetric convex sets $K,L\subseteq\R^n$ and all $\lambda\in[0,1]$,
		\begin{equation}\label{eq:mainBM}
			\mu(\lambda K+(1-\lambda)L)^{p_n}
			\ge
			\lambda\mu(K)^{p_n}+(1-\lambda)\mu(L)^{p_n},
		\end{equation}
		where
		$$
		p_n\ge \frac{c}{n^2\ln n}
		$$
		for some absolute constant $c>0$.
	\end{theorem}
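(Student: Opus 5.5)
We follow the local (infinitesimal) approach of Kolesnikov--Milman and Livshyts, as refined by Eskenazis--Giannopoulos--Tziotziou, and improve one estimate. By standard approximation (smooth, strictly convex symmetric bodies; smooth even densities $e^{-V}$ with $V$ strictly convex) and the usual reduction of $p$-concavity along Minkowski combinations to a second-variation inequality, it suffices to show the following. Let $K$ be such a body and $K_t=K+t\Omega$ with $\Omega$ symmetric (i.e.\ $h_{K_t}=h_K+t\varphi$ with $\varphi$ even). Then
\[
\frac{d^2}{dt^2}\Big|_{t=0}\mu(K_t)\le (1-p)\frac{\bigl(\frac{d}{dt}\big|_{t=0}\mu(K_t)\bigr)^2}{\mu(K)} ,\qquad p=p_n .
\]
Following Kolesnikov--Milman, we would rewrite this via the restricted measure $\mu_K=\mu|_K/\mu(K)$ and the Neumann problem $Lu=\Delta u-\langle\nabla V,\nabla u\rangle=\text{const}$ with boundary data $\langle\nabla u,n_K\rangle=\varphi/|\nabla h|$. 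Using Reilly's formula, the condition becomes a Poincaré-type inequality for even functions on $(K,\mu_K)$:
\[
\int_K |\nabla^2 u|_{HS}^2+\langle\nabla^2 V\nabla u,\nabla u\rangle\,d\mu_K \;\ge\; p\,\Var_{\mu_K}(Lu),
\]
together with a nonnegative boundary term coming from the convexity of $K$.

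The key steps are as follows.

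\emph{Step 1 (first lower bound).} Since $(Lu)^2\le n(1+\delta)|\nabla^2u|_{HS}^2+\text{(drift term)}$, and the drift term is absorbed using $\nabla^2V\ge0$ and Cauchy--Schwarz, the left side controls $\Var(Lu)$ up to a factor $n$. The remaining difficulty is the cross term $\langle\nabla V,\nabla u\rangle$. It is controlled in $L^2(\mu_K)$ by $\int\langle\nabla^2V\nabla u,\nabla u\rangle$ times something. Previous works lose a factor $n^2$ or $n\ln n$ here through a Poincaré/Brascamp--Lieb argument applied to $\nabla u$.

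\emph{Step 2 (evenness).} Because $u$ is even, each $\partial_iu$ is odd and therefore has $\mu_K$-mean zero. Brascamp--Lieb for the log-concave measure $\mu_K$ then gives
\[
\int|\nabla u|^2d\mu_K\le\int\langle(\nabla^2V)^{-1}\nabla^2u\,\nabla^2 u\rangle,
\]
and a Poincaré inequality for $\mu_K$ gives $\int|\nabla u|^2\le C_P(\mu_K)\int|\nabla^2u|_{HS}^2$. The improvement by a factor $n$ over Eskenazis--Giannopoulos--Tziotziou should come from using the sharpest available bound on the Poincaré constant of an even (isotropic after a linear map) log-concave measure. This is Klartag's $C_P\lesssim \ln n\cdot\|\Cov\|_{op}$. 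We combine it with an affine-invariant normalization: $\mu_K$ is placed in isotropic position, and the bound $\|\nabla V\|$ in the dual norm $\lesssim n$ on $K$ is used for even log-concave measures. This normalization step is the one needing care, since $K$ and $\mu$ are transformed simultaneously.

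\emph{Step 3 (bookkeeping).} Inserting these bounds into the Reilly identity gives $p\gtrsim 1/(n\cdot n\ln n)$: one factor $n$ comes from the trace/Hessian Cauchy--Schwarz, and the factor $n\ln n$ from the Poincaré constant times the moment bound on $|\nabla V|$. Integrating the infinitesimal inequality then gives \eqref{eq:mainBM}.

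The main obstacle is Step 2, namely bounding the drift cross term with only a single loss of $n\ln n$ rather than $n^2\ln n$. I would first try to avoid estimating $|\nabla V|$ pointwise. Instead I would use the integration-by-parts identity $\int\langle\nabla V,\nabla u\rangle^2d\mu_K\le\int\langle\nabla^2V\nabla u,\nabla u\rangle+\int|\nabla^2u|^2$ plus boundary terms, which holds for odd vector fields. Only if that fails would I fall back to the isotropic moment bound.
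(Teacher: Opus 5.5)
\textbf{There is a genuine gap: the drift term is never actually controlled.} Several of your ingredients match the paper: isotropic normalization of $\mu_K$ via affine invariance, oddness of $\partial_i u$ so that Poincar\'e gives $\int_K|\nabla u|^2\,d\mu_K\le\vartheta^2\int_K\|\Hess u\|_{\HS}^2\,d\mu_K$, and Klartag's bound $\vartheta^2\lesssim\ln n$. But the step you flag as the main obstacle, controlling $\langle\nabla V,\nabla u\rangle$, is not carried out, and neither of your two routes works as stated.

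Your fallback invokes a bound $|\nabla V|\lesssim n$ (in some norm) on all of $K$. No such pointwise bound exists in isotropic position. A standard Gaussian restricted to a very large ball is essentially isotropic, yet $|\nabla V(x)|=|x|$ is unbounded on $K$. The only available control is the $L^1$ bound $\int_K|\nabla V|\,d\mu_K\le Cn$ of Eldan--Klartag. An $L^1$ bound on $\nabla V$ cannot be paired by Cauchy--Schwarz with the $L^2$ bound on $\nabla u$ over all of $K$.

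Your preferred route is an inequality $\int\langle\nabla V,\nabla u\rangle^2d\mu_K\le\int\langle\Hess V\nabla u,\nabla u\rangle d\mu_K+\int\|\Hess u\|_{\HS}^2d\mu_K$ plus boundary terms. This is essentially the Reilly formula again. In the Kolesnikov--Livshyts criterion, $u$ is an arbitrary even solution of $L_Vu=1$ whose Neumann data is uncontrolled, so the boundary terms cannot be discarded. If they could be, combining with $(\Delta u)^2\le n\|\Hess u\|_{\HS}^2$ would give $p_n\gtrsim 1/n$, i.e.\ the conjecture up to a constant. That is far beyond what this method yields, so it cannot be a routine integration by parts.

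\textbf{The missing idea is the paper's energy lemma.} Use the $L^1$ bound only through Markov's inequality, and integrate the equation linearly instead of squaring it.
\begin{itemize}
\item After normalizing $\mu_K$ to be isotropic, let $A=\{x\in K:|\nabla V(x)|\le 4Cn\}$. Markov gives $\mu_K(A)\ge 3/4$.
\item Integrating $L_Vu=1$ over $A$ gives $\mu_K(A)=\int_A\Delta u\,d\mu_K-\int_A\langle\nabla V,\nabla u\rangle\,d\mu_K$.
\item Set $E=\int_K\|\Hess u\|_{\HS}^2d\mu_K$. The first term is at most $\sqrt n\,\mu_K(A)^{1/2}E^{1/2}$.
\item The second term is at most $4Cn\,\mu_K(A)^{1/2}\bigl(\int_K|\nabla u|^2d\mu_K\bigr)^{1/2}\le 4Cn\vartheta\,\mu_K(A)^{1/2}E^{1/2}$. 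Here Poincar\'e is applied on the whole support $K$, so the non-convexity of $A$ is irrelevant.
\item Hence $E\ge c/(\sqrt n+n\vartheta)^2\gtrsim 1/(n^2\ln n)$. The term $\langle\Hess V\nabla u,\nabla u\rangle\ge0$ is simply dropped.
\end{itemize}

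\textbf{Your bookkeeping is off.} The trace term contributes only a lower-order $\sqrt n$, the two contributions add rather than multiply, and the entire loss $n^2\ln n$ comes from $(n\vartheta)^2$.

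\textbf{Two smaller corrections to your reduction.} The criterion you wrote, with $p\,\Var_{\mu_K}(Lu)$ on the right, is vacuous when $Lu$ is constant. It should read $p\bigl(\int_K Lu\,d\mu_K\bigr)^2$, which equals $p$ for $Lu=1$. Also, the Reilly boundary term is not nonnegative. It is bounded below, via $\Pi>0$, by exactly the second-variation integrand.
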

	
	Our proof follows the analytic route used in \cite{AAN,Livshyts-2023}, but introduces a new energy estimate. As pointed out by Eskenazis, Giannopoulos and Tziotziou \cite[Remark~3.8]{AAN}, a set of large measure on which the gradient of the potential is controlled need not be convex, whereas convexity is required in their argument to control the Poincaré constant of the restricted measure. Our key observation is that one may instead apply the Poincaré inequality on the entire support before integrating the equation over the low-gradient set. This leads to a new energy estimate 	(Lemma~\ref{lem:energy}), which bypasses the convexity obstruction and improves the exponent obtained in \cite{AAN} by a factor of \(n\).
	
	This paper is organized as follows. In Section~\ref{sec:prelim}, we recall the required facts about convex bodies, log-concave measures and the variation method. In Section~\ref{sec:proof}, we establish the new energy estimate and prove Theorem~\ref{main}.
	
	\vskip 20pt
	
	\section{\bf Preliminaries}\label{sec:prelim}
	\vskip 5pt
	
	All constants denoted by $c,C,c_1,C_1,\ldots$ are positive absolute constants whose value may change from line to line. We write $|x|$ for the Euclidean norm, $\langle x,y\rangle$ for the Euclidean inner product, $B_2^n$ for the Euclidean unit ball and $\Sph^{n-1}=\partial B_2^n$. All convex sets are assumed to be nonempty below.
	
	\subsection{Convex bodies}
	\
	
	A convex body is a compact convex subset of $\R^n$ with nonempty interior. A convex body $K$ is symmetric if $K=-K$. We write $\mathcal K_s^n$ for the class of symmetric convex bodies. Good references on convex bodies are the books by Gardner \cite{Gardner-2006} and Schneider \cite{Schneider-book}.
	
	The support function of a convex body $K$ is
	\[
	h_K(u)=\max_{x\in K}\langle x,u\rangle,\qquad u\in\Sph^{n-1}.
	\]
	The Hausdorff distance between convex bodies $K$ and $L$ in $\R^n$ is 
	\[
	d_H(K,L)=  \left\|h_K-h_L\right\|_{C^0\left(\mathbb{S}^{n-1}\right)}.
	\]
	If $K$ and $L$ are convex bodies in $\mathbb{R}^n$, then for $\alpha, \beta\ge 0$,
	$$
	h_{\alpha K+\beta L}=\alpha	h_{K}+\beta	h_{L}.
	$$
	
	A convex body $K$ in $\R^n$ is in the class of $\mathcal C^2_+$ if its boundary is a $C^2$ hypersurface with everywhere positive Gauss curvature. 
	We denote by $\mathcal K^n_{+,s}$ the class of $\mathcal C^2_+$ symmetric convex bodies  in $\mathbb{R}^n$.
	Let \( C_e^2(\mathbb{S}^{n-1}) \) be the set of even \( C^2 \) functions on \( \mathbb{S}^{n-1} \). For any \( \psi \in C_e^2(\mathbb{S}^{n-1}) \) and \( K \in \mathcal K^n_{+,s} \), the convex body \( K_s \) with support function
	\(
	h_{K_s} = h_K + s\psi
	\)
	is also in \( \mathcal K^n_{+,s} \) when \( s \) is sufficiently small. Thus,
	\[
	C_e^2(\mathbb{S}^{n-1})=\{h_L-h_K: K,L\in \mathcal{K}_{+,s}^n\}.
	\]
	
	Every convex body can be approximated in the Hausdorff metric by bodies in $\mathcal C^2_+$, and the approximation may be chosen symmetric when the original body is symmetric. 
	
	\subsection{Log-concave measures}
	\
	
	A Borel measure $\mu$  on $\mathbb{R}^n$ is log-concave if it has density 
	\(
	d\mu=e^{-V}dx,
	\)
	where $V:\R^n\to(-\infty,+\infty]$ is convex, and it is a probability measure if $\mu(\R^n)=1$. The value $+\infty$ is allowed, so restrictions of log-concave measures to convex sets are also log-concave. The measure is even if $V(x)=V(-x)$, equivalently $\mu(A)=\mu(-A)$ for every Borel set $A$. 
	
	Let \(\mu\) and \(\nu\) be Borel probability measures on \(\mathbb R^n\). Their convolution \(\mu*\nu\) is the Borel probability measure defined by
	\[
	(\mu*\nu)(A)
	=
	\int_{\mathbb R^n}\nu(A-x)\,d\mu(x)
	=
	\iint_{\mathbb R^n\times\mathbb R^n}
	\mathbf 1_A(x+y)\,d\mu(x)d\nu(y)
	\]
	for every Borel set \(A\subset\mathbb R^n\), where
	\(
	A-x=\{z\in\mathbb R^n:x+z\in A\}.
	\)
	
	For \(\eps>0\), let \(\gamma_\eps\) denote the centered Gaussian probability measure on \(\mathbb R^n\) with covariance matrix \(\eps^2\Id\), i.e., 
	\(
	d\gamma_\eps(x)
	=
	\frac{1}{(2\pi\eps^2)^{\frac{n}{2}}}
	\exp(-\frac{|x|^2}{2\eps^2})dx.
	\) If \(\mu\) is a probability measure on \(\mathbb R^n\), then the convolution
	\(
	\mu_\eps:=\mu*\gamma_\eps
	\) is also a probability measure and has a \(C^\infty\) density. 	Moreover, if \(\mu\) is even, then \(\mu_\eps\) is also even for $\eps>0$.
	
	For \(a\in\mathbb R^n\), denote the Dirac mass at \(a\) by \(\delta_a\).
	As \(\eps\to0\), \(\gamma_\eps\)  converges weakly to \(\delta_0\).
	Consequently, if \(\mu\) is a probability measure on \(\mathbb R^n\), then \(	\mu*\gamma_\eps\)  converges weakly to \(\mu\).

	We shall use the theorem of Pr\'ekopa  \cite{Prekopa} and convolution method to regularize measures.
	
	\begin{theorem}[\cite{Prekopa}]\label{thm:prekopa}
		Let $F:\R^n\times\R^m\to[0,\infty)$ be log-concave. Then the marginal
		\[
		G(x)=\int_{\R^m}F(x,y)\,dy
		\]
		is log-concave on its support, whenever the integral is finite. In particular, the convolution of two log-concave densities is log-concave.
	\end{theorem}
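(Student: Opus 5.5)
The plan is to deduce the statement from the Prékopa--Leindler inequality: if $\lambda\in(0,1)$ and $f,g,h:\R^m\to[0,\infty)$ are measurable with $h(\lambda y+(1-\lambda)z)\ge f(y)^\lambda g(z)^{1-\lambda}$ for all $y,z$, then $\int h\ge(\int f)^\lambda(\int g)^{1-\lambda}$. Given this, fix $x_0,x_1$ in the support of $G$ (where $G$ is finite and positive) and $\lambda\in(0,1)$, and put $x_\lambda=\lambda x_0+(1-\lambda)x_1$. Define $f=F(x_0,\cdot)$, $g=F(x_1,\cdot)$ and $h=F(x_\lambda,\cdot)$. Log-concavity of $F$ on $\R^n\times\R^m$, applied to the points $(x_0,y)$ and $(x_1,z)$, gives
\[
h(\lambda y+(1-\lambda)z)\ge f(y)^\lambda g(z)^{1-\lambda}.
\]
Prékopa--Leindler then yields $G(x_\lambda)\ge G(x_0)^\lambda G(x_1)^{1-\lambda}$, which is the log-concavity of $G$ on its support. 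Measurability of the slices follows from Fubini, since $F$ is log-concave and hence Borel.

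For the convolution statement, let $u$ and $v$ be log-concave densities and take $F(x,y)=u(x-y)v(y)$ on $\R^n\times\R^n$. The map $(x,y)\mapsto u(x-y)$ is a log-concave function composed with a linear map, so it is log-concave, and products of log-concave functions are log-concave. The marginal of $F$ is $u*v$, which is finite because $\int u*v=\int u\int v<\infty$.

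It remains to prove Prékopa--Leindler, which is the only substantial step. I would argue by induction on $m$.

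\emph{Case $m=1$.} By monotone convergence and truncation, we may assume $f$ and $g$ are bounded with positive finite integrals. After scaling $f\mapsto af$, $g\mapsto bg$ and $h\mapsto a^\lambda b^{1-\lambda}h$, we may further assume $\sup f=\sup g=1$. For $t\in(0,1)$ the superlevel sets satisfy
\[
\{h\ge t\}\supseteq\lambda\{f\ge t\}+(1-\lambda)\{g\ge t\},
\]
and both sets on the right are nonempty. The one-dimensional Brunn--Minkowski inequality $|A+B|\ge|A|+|B|$, proved first for compact sets by translating so that $\max A=0=\min B$, then gives $|\{h\ge t\}|\ge\lambda|\{f\ge t\}|+(1-\lambda)|\{g\ge t\}|$. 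Integrating over $t\in(0,1)$ gives $\int h\ge\lambda\int f+(1-\lambda)\int g$, and the arithmetic--geometric mean inequality finishes this case.

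\emph{Inductive step.} Write $y=(y',s)\in\R^{m-1}\times\R$. For fixed $s_0,s_1$ and $s_\lambda=\lambda s_0+(1-\lambda)s_1$, the hypothesis holds for the slices $f(\cdot,s_0)$, $g(\cdot,s_1)$ and $h(\cdot,s_\lambda)$ in dimension $m-1$. The induction hypothesis then shows that the partial integrals $\int h(\cdot,s)$, $\int f(\cdot,s)$ and $\int g(\cdot,s)$ satisfy the one-dimensional hypothesis. Applying the case $m=1$ and then Fubini completes the induction.

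I expect the main care to be needed in the case $m=1$: the measure-theoretic points (measurability of $\lambda A+(1-\lambda)B$, handled by inner regularity with compact sets) and the normalisation $\sup f=\sup g=1$, which is what guarantees the level sets are nonempty for all $t<1$.
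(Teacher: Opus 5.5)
The paper does not prove this statement; it quotes it from Pr\'ekopa and uses it only to see that $\mu*\gamma_\eps$ has a log-concave density. Your route is the standard self-contained proof, and it is correct. You apply the Pr\'ekopa--Leindler inequality to the three slices $F(x_0,\cdot)$, $F(x_1,\cdot)$, $F(x_\lambda,\cdot)$. You then prove Pr\'ekopa--Leindler by the one-dimensional Brunn--Minkowski inequality on superlevel sets, followed by induction on $m$ through Fubini. Compared with the citation it costs a page, but it also yields the functional inequality itself (hence Brunn--Minkowski), which is more than the paper needs.

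Two small justifications should be repaired.

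First, a log-concave function need not be Borel. The indicator of the open unit disc together with a non-Borel subset of the unit circle is log-concave. So ``log-concave and hence Borel, then Fubini'' does not establish measurability of the slices. Instead, observe that each slice $F(x,\cdot)$ is itself log-concave on $\R^m$. It is therefore continuous on the interior of the convex set where it is positive and zero off that set. Since the boundary of a convex set is Lebesgue-null, every slice is Lebesgue measurable. The same remark, plus completeness of Lebesgue measure, makes the partial integrals in your inductive step measurable functions of $s$ that are defined for every $s$.

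Second, $\int u*v=\int u\int v<\infty$ only gives finiteness of $u*v$ almost everywhere. Finiteness everywhere follows because an integrable log-concave function is bounded, so $u*v\le\|u\|_\infty\int v$. In any case, the inequality $G(x_\lambda)\ge G(x_0)^\lambda G(x_1)^{1-\lambda}$ you derive remains valid for $[0,\infty]$-valued $G$.
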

	
	\vskip 3pt

	A probability measure $\mu$ on $\R^n$ is centered if 
	\[
	\int_{\R^n}x\,d\mu(x)=0,
	\]
	and is isotropic if in addition
	\[
	\int_{\R^n} x_i x_j\,d\mu(x)=\delta_{ij}.
	\]
	Every centered log-concave probability measure can be put into isotropic position by an invertible linear map. More precisely, if $\mu$ is centered, set $T=\Cov(\mu)^{-\frac{1}{2}}$, then $T_\#\mu$ is isotropic, where \(\Cov(\mu)\) is the symmetric positive definite matrix
	\[
	\Cov(\mu)
	=
	\int_{\mathbb R^n}x\otimes x\,d\mu(x),
	\]
	and $T_\#\mu$ is the push-forward of $\mu$ under $T$ given by 
	\[
	(T_\#\mu)(A)=\mu(T^{-1}(A)),
	\]
	for every Borel set \(A\subset\mathbb R^n\).
	Even log-concave probability measures are centered.
	
	The Poincar\'e constant $\vartheta_\mu$ of a probability measure $\mu$ on $\R^n$ is defined as the smallest $\vartheta>0$ such that
	\begin{equation}\label{eq:poincare}
		\Var_\mu(f)\le \vartheta^2\int_{\R^n} |\nabla f|^2\,d\mu
	\end{equation}
	for all locally Lipschitz functions $f$ on $\R^n$. The celebrated Kannan--Lov\'asz--Simonovits conjecture \cite{KLS} asks whether there exists an absolute constant $C>0$ such that 
	\[
	\vartheta_\mu\le C
	\]
	for all isotropic log-concave probability measures. Indeed, this conjecture was originally formulated in terms of a Cheeger inequality rather than a Poincar\'e inequality, but the two formulations turn out to be equivalent. Please refer to Buser \cite{Buser1982ASENP}, Ledoux \cite{Ledoux1994PAMS} and Milman \cite{milman2009invent}.
	
	The best currently known estimate for $\vartheta_\mu$ on isotropic log-concave probability measures is due to Klartag \cite{Klartag-2023}. For more progress, please refer to the survey of Klartag and Lehec \cite{KL2025BAMS}.
	
	\begin{theorem}[\cite{Klartag-2023}]\label{thm:KLS}
		There exists an absolute constant $C>0$ such that every isotropic log-concave probability measure $\mu$ on $\R^n$, $n\ge2$, satisfies
		\[
		\vartheta_\mu\le C\sqrt{\ln n}.
		\]
	\end{theorem}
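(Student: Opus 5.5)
The statement is Klartag's theorem, which the paper imports and does not prove. If I had to establish it, I would follow the stochastic localization route of Eldan, as refined by Lee--Vempala, Chen, and finally Klartag. First reduce the problem. By E.~Milman's equivalence of Cheeger, Poincaré and concentration for log-concave measures, it suffices to bound $\vartheta_\mu$ up to a bounded factor, and the bound may be allowed to lose absolute constants.

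\textbf{Step 1: the localization process.} Run the process $d\mu_t(x)\propto \exp(\langle c_t,x\rangle-\tfrac t2|x|^2)\,d\mu(x)$, driven by a Brownian motion through $dc_t=dW_t+a_t\,dt$. Here $a_t$ is the barycenter of $\mu_t$ and $A_t=\Cov(\mu_t)$. For a $1$-Lipschitz $f$, the martingale property gives the variance decomposition
\[
\Var_\mu(f)=\mathbb E\,\Var_{\mu_T}(f)+\int_0^T \mathbb E\Big|\int (x-a_t)f\,d\mu_t\Big|^2dt .
\]
At time $T$ the measure $\mu_T$ is $T$-uniformly log-concave. Bakry--Émery therefore gives $\Var_{\mu_T}(f)\le 1/T$. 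The integrand is controlled by $\|A_t\|_{op}\Var_{\mu_t}(f)$, so the problem reduces to showing that $\|A_t\|_{op}$ stays $O(1)$ with high probability up to some time $T$. Plugging this back in then gives $\vartheta_\mu^2\lesssim 1/T$. Under this reduction, the target $\vartheta_\mu\le C\sqrt{\ln n}$ corresponds to $T\approx c/\ln n$.

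\textbf{Step 2: controlling the covariance.} By Itô's formula, $dA_t=\int (x-a_t)^{\otimes 2}\langle x-a_t,dW_t\rangle\,d\mu_t-A_t^2\,dt$. I would track the potential $\operatorname{tr}(A_t^{q})$ with $q\approx \ln n$, so that $\operatorname{tr}(A_t^q)^{1/q}\approx\|A_t\|_{op}$. The drift of this potential involves third-moment tensors $\mathbb E_{\mu_t}[\langle X,\theta\rangle X\otimes X]$.

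This is the main obstacle. The crude estimate, which bounds these tensors through $\vartheta_{\mu_t}$ itself, produces a bootstrap that only yields polylogarithmic bounds (Chen's $\exp(\sqrt{\ln n\ln\ln n})$ and Klartag--Lehec's $\ln^{4}n$).

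\textbf{Step 3: the improved estimate.} Following Klartag, I would replace the bootstrap with an "improved Lichnerowicz" inequality. The claim is that a $t$-uniformly log-concave measure with $\|\Cov\|_{op}\le 1$ has Poincaré constant squared $\lesssim t^{-1/2}$, rather than $t^{-1}$; I would derive this via a Bochner/$\Gamma_2$ argument combined with the covariance bound. Feeding this into the third-moment term, the tensor contribution to $d\operatorname{tr}(A_t^q)$ becomes of order $q^2\, t^{-1/2}\operatorname{tr}(A_t^q)$-type quantities. These are integrable near $t=0$, which yields $\mathbb E\operatorname{tr}(A_t^q)\le C n$ for $t\le c/\ln n$ through a Grönwall argument and a stopping time. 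For $t\ge 1$, the same inequality $\vartheta_{\mu_t}^2\lesssim t^{-1/2}$ keeps $\|A_t\|_{op}$ bounded up to $T=c/\ln n$ along the whole range of times.

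\textbf{Step 4: conclusion.} Combining Steps 1 and 3 gives $\vartheta_\mu^2\le C\ln n$, which is the claimed bound.
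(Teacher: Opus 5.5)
The paper does not prove this statement; it imports it from Klartag, so your sketch has to stand on its own. Its architecture is the right one: Eldan's localization, E.~Milman's reduction to Lipschitz functions, the potential $\operatorname{tr}(A_t^q)$ with $q\approx\ln n$, and the improved Lichnerowicz inequality $\vartheta_\mu^2\le\sqrt{\|\Cov(\mu)\|_{op}/t}$ for $t$-uniformly log-concave $\mu$. That last inequality does follow from Bochner's formula for a first eigenfunction $-Lf=\lambda f$, using $\int\nabla f\,d\mu=\lambda\int(x-a)f\,d\mu$ to control the mean of $\nabla f$.

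\textbf{Gap 1: the time horizon in Step 3 is wrong.} Suppose the drift of $\operatorname{tr}(A_t^q)$ is $\lesssim q^2t^{-1/2}\operatorname{tr}(A_t^q)$ while $\|A_t\|_{op}\le2$. Then Gr\"onwall gives $\mathbb E\operatorname{tr}(A_t^q)\le n\exp(Cq^2\sqrt t)$, hence $\|A_t\|_{op}\lesssim n^{1/q}e^{Cq\sqrt t}$.
\begin{itemize}
\item With $q\approx\ln n$ this is $O(1)$ only when $q^2\sqrt t\lesssim\ln n$, i.e.\ $t\lesssim1/\ln^2 n$.
\item At $t=c/\ln n$ the exponent is of order $\ln^{3/2}n$, and you only get $\|A_t\|_{op}\lesssim e^{C\sqrt{\ln n}}$.
\item No choice of $q$ helps, since $\min_q(\ln n/q+Cq\sqrt t)\asymp\sqrt{\ln n}\,t^{1/4}$.
\end{itemize}
Integrability of $t^{-1/2}$ at $0$ is not the issue; the size of $q^2\int_0^T s^{-1/2}\,ds$ is. Separately, the sentence about $t\ge1$ keeping $\|A_t\|_{op}$ bounded ``up to $T=c/\ln n$'' is inconsistent, since $c/\ln n<1$.

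\textbf{Gap 2: the terminal-time bound loses $\sqrt{\ln n}$.} Your Steps 1 and 4 use Bakry--\'Emery, $\Var_{\mu_T}(f)\le1/T$, at the terminal time. With the horizon forced to $T\asymp1/\ln^2 n$, this only gives $\vartheta_\mu^2\lesssim\ln^2n$, i.e.\ $\vartheta_\mu\lesssim\ln n$.

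\textbf{The fix.} Apply the improved Lichnerowicz inequality a second time, at the terminal time $T=c/\ln^2n$.
\begin{itemize}
\item Let $\tau=\inf\{t:\|A_t\|_{op}\ge2\}\wedge T$. The variance decomposition stopped at $\tau$ gives $\Var_\mu(f)\le(1-2T)^{-1}\mathbb E\Var_{\mu_\tau}(f)$.
\item On $\{\tau=T\}$, for $1$-Lipschitz $f$, $\Var_{\mu_T}(f)\le\vartheta_{\mu_T}^2\le\sqrt{2/T}\lesssim\ln n$.
\item Taking $q$ a large multiple of $\ln n$ and $c$ small makes $\mathbb P(\tau<T)$ polynomially small in $n$. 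Since $\Var_{\mu_\tau}(f)\le\operatorname{tr}A_\tau\le2n$, the event $\{\tau<T\}$ contributes negligibly.
\end{itemize}
Combined with E.~Milman's theorem, this yields $\vartheta_\mu^2\lesssim\ln n$. It is the square root in the improved Lichnerowicz bound, used at the end, that turns the horizon $1/\ln^2n$ into $\ln n$.
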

	
	We shall also use the following $L^1$ estimate on the gradient of the logarithmic potential $V$, due to Eldan and Klartag \cite[Lemma~11]{Eldan-Klartag-2008}; see also \cite[Theorem 1.3]{AAN}.
	
	\begin{lemma}[\cite{Eldan-Klartag-2008}]\label{thm:EK}
		There exists an absolute constant $C>0$ such that every isotropic log-concave probability measure $\mu$ with $d\mu=e^{-V}dx$ on $\R^n$ satisfies
		\begin{equation}\label{eq:EK}
			\int_{\R^n} |\nabla V|\,d\mu\le Cn.
		\end{equation}
	\end{lemma}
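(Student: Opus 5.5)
We would prove \eqref{eq:EK} through the coarea formula, working with the density $f=e^{-V}$ rather than with $V$ itself. The starting point is the identity $\int|\nabla V|\,d\mu=\int|\nabla f|\,dx$. After smoothing $\mu$ by convolution with $\gamma_\eps$ (which keeps log-concavity by Theorem~\ref{thm:prekopa} and changes the covariance only by $\eps^2\Id$), we may assume $f$ is smooth. The coarea formula then gives
\[
\int_{\R^n}|\nabla f|\,dx=\int_0^{\max f}\mathcal H^{n-1}(\partial A_s)\,ds,\qquad A_s=\{f>s\}.
\]
Each $A_s$ is convex because $f$ is log-concave. The mass can also be written layer by layer as $1=\int f\,dx=\int_0^{\max f}|A_s|\,ds$. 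So it suffices to compare $\mathcal H^{n-1}(\partial A_s)$ with $n|A_s|$, at least in an averaged sense over $s$.

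The geometric tool is the elementary bound $\mathcal H^{n-1}(\partial A)\le n|A|/r$, valid for any convex $A$ that contains a Euclidean ball $x_0+rB_2^n$. It follows from the cone-volume formula $|A|=\frac1n\int_{\partial A}h_{A-x_0}(\nu)\,d\mathcal H^{n-1}$ together with $h_{A-x_0}\ge r$. In the even case (which is all the paper needs) each $A_s$ is symmetric, so it contains $r_sB_2^n$ with $r_s$ its minimal half-width. Isotropy controls this width whenever $\mu(A_s)$ is not small. Indeed, if $\mu(A_s)\ge 1/4$, say, then $A_s$ lies in a symmetric slab $\{|\langle x,\theta\rangle|\le r_s\}$. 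The one-dimensional marginal of $\mu$ in direction $\theta$ is an isotropic log-concave density on $\R$, hence bounded by an absolute constant, so this slab has measure at most $Cr_s$. This forces $r_s\ge c$, and these "bulk" levels $s\le s_0$ contribute at most $Cn\int_0^{s_0}|A_s|\,ds\le Cn$.

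The remaining levels $s>s_0$, where $\mu(A_s)<1/4$, are the main obstacle. There the superlevel sets can be thin, and the crude bound above is useless. For these levels we would instead bound the whole contribution at once by a slicing argument. Fix a direction $\theta$ and write $g_y(t)=f(y+t\theta)$ for $y\in\theta^\perp$. Each $g_y$ is log-concave, hence unimodal, so $\int|\partial_\theta f|\,dx=2\int_{\theta^\perp}\max_t g_y\,dy$. We would then use $|v|\le C\sqrt n\int_{\Sph^{n-1}}|\langle v,\theta\rangle|\,d\sigma(\theta)$ and try to show that $\int_{\theta^\perp}\max_t g_y\,dy\le C\sqrt n$ for every $\theta$. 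The plan for that last bound is to compare $\max_t g_y$ with the value of $g_y$ on the central hyperplane $\theta^\perp$. By evenness and one-dimensional log-concavity, the mode of $g_y$ lies within a distance comparable to the fiber's spread, and the factor by which $\max g_y$ exceeds $g_y(0)$ can be absorbed using the fact that the central section satisfies $\int_{\theta^\perp}f\,dy\asymp 1$ for an isotropic density. If this comparison loses more than a constant factor, we would fall back on restricting the high levels to $\{f>e^{-Cn}\max f\}$. On that set the known bound $\max f\le e^{Cn}f(0)$ for centered log-concave densities lets us apply the inradius argument with $r_s$ replaced by a lower bound from the one-dimensional marginals. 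The tail levels below $e^{-Cn}\max f$ would be handled by the bulk estimate.
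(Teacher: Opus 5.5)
The paper gives no proof of this lemma; it is quoted from Eldan--Klartag (see also Eskenazis--Giannopoulos--Tziotziou), so your proposal has to stand on its own. Your bulk part is correct: $\mu(A_s)\ge\frac14$ forces $r_s\ge c$, hence $\mathcal H^{n-1}(\partial A_s)\le Cn|A_s|$. The genuine gap is the levels $s>s_0$, which you rightly call the main obstacle; the proposed route for them does not work.

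(i) The slicing step is a reformulation, not a reduction. Since $|v|=c_n^{-1}\int_{\Sph^{n-1}}|\langle v,\theta\rangle|\,d\sigma(\theta)$ is an identity with $c_n\asymp n^{-1/2}$, the lemma is \emph{equivalent} to the $\theta$-average of $\int_{\theta^\perp}\max_t g_y\,dy\le C\sqrt n$. You ask for this bound for every $\theta$, which is stronger, and you give no argument for it.

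(ii) The suggested mechanism is false. Evenness of $f$ only gives $g_{-y}(t)=g_y(-t)$; it does not place the mode of $g_y$ near $t=0$. Take the isotropic cube $Q=[-\sqrt3,\sqrt3]^n$ and $\theta=n^{-1/2}(1,\dots,1)$. A typical fibre has length $\asymp n^{-1/2}$, while its height is essentially $\langle x,\theta\rangle$ for any of its points, a quantity of variance $1$. So most fibres miss $\theta^\perp$ altogether. Consequently $\int_{\theta^\perp}\max_t g_y\,dy=|P_{\theta^\perp}Q|/|Q|=\sqrt n/(2\sqrt3)$, whereas $\int_{\theta^\perp}g_y(0)\,dy\asymp 1$. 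The comparison with the central section therefore loses $\sqrt n$, not a constant, and showing it never loses more is the entire difficulty.

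(iii) The fallback does not help. For even $f$ one has $\max f=f(0)$, so that bound carries no information. One-dimensional marginals only give $r_s\ge\mu(A_s)/2$. Combined with $\mu(A_s)\ge s|A_s|$, this bounds the high levels by $Cn\ln(\max f/s_0)$, and already for the standard Gaussian $\ln(\max f/s_0)\asymp n$.

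The missing idea is to use the room \emph{above} the graph of $V$ rather than inside the small superlevel sets. Apply your cone-volume/inradius bound once, in $\R^{n+1}$, to the epigraph $\{(x,t):t\ge V(x)\}$ instead of to each level set.
\begin{itemize}
\item If $V\le s-r$ on $rB_2^n$, the ball of radius $r$ centred at $(0,s)$ lies in the epigraph. Its distance to each tangent hyperplane of the graph is therefore at least $r$, which gives, for every $x$ in the support, $\langle\nabla V(x),x\rangle+s-V(x)\ge r\sqrt{1+|\nabla V(x)|^2}\ge r|\nabla V(x)|$.
\item Integrate this against $\mu$ and use $\int\langle\nabla V,x\rangle\,d\mu\le n$. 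This follows by integration by parts; the boundary term has the right sign because the support is convex and contains $0$. The result is $\int|\nabla V|\,d\mu\le (n+s-\int V\,d\mu)/r$.
\item For isotropic $\mu$, the standard estimates $\|f\|_\infty\le e^nf(0)$ and $f\ge e^{-Cn}f(0)$ on $cB_2^n$ allow $r=c$ and $s=\min V+Cn$. Then $s-\int V\,d\mu\le Cn$, and the bound $Cn$ follows.
\end{itemize}
This argument needs no level-set decomposition and no evenness. By contrast, your bulk argument uses symmetric level sets, so as written it covers only the even case.
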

	
	The order $n$ in \eqref{eq:EK} is optimal; see \cite[Proposition 5.7]{AAN}.
	\begin{lemma}[\cite{AAN}]\label{optimal}
		There exists an even isotropic log-concave probability measure $\mu$ with $d\mu=e^{-V}dx$ on $\mathbb{R}^n$ such that
		$$
		\int_{\mathbb{R}^n}|\nabla V| d \mu \geqslant c n,
		$$
		where $c>0$ is an absolute constant.
	\end{lemma}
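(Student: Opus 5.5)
We give an explicit example: a log-concave measure whose potential is a multiple of the cube norm $\|x\|_\infty=\max_i|x_i|$. The point is that this norm has gradient of Euclidean length exactly $1$ almost everywhere, while under isotropic normalization the scale parameter turns out to be of order $1/n$. The plan is to take
\[
d\mu(x)=Z_a^{-1}e^{-\|x\|_\infty/a}\,dx,\qquad V(x)=\frac{\|x\|_\infty}{a}+\ln Z_a,
\]
with $a>0$ chosen so that $\mu$ is isotropic.

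First we verify the structural properties. $V$ is convex because it is a norm plus a constant, and it is even. The measure is invariant under all coordinate permutations and sign changes, since $\|\cdot\|_\infty$ is. Hence $\Cov(\mu)$ is a multiple of $\Id$, and $\mu$ is centered because it is even. Scaling $a$ rescales $\mu$ by a dilation, so we can fix $a$ by the single condition $\int x_1^2\,d\mu=1$.

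Next, the gradient integral is trivial. Off a null set, exactly one coordinate $i$ attains the maximum, and there $\nabla\|x\|_\infty=\operatorname{sign}(x_i)e_i$. Therefore $|\nabla V|=1/a$ almost everywhere, and
\[
\int_{\R^n}|\nabla V|\,d\mu=\frac1a.
\]
Everything reduces to showing $1/a\ge cn$.

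To estimate $a$, we use polar coordinates with respect to the cube. Write $t=\|x\|_\infty$. The volume of $\{\|x\|_\infty\le t\}$ is $(2t)^n$, so $t$ has density proportional to $t^{n-1}e^{-t/a}$. That is, $t$ is Gamma distributed with shape $n$ and scale $a$, and $\mathbb E\,t^2=a^2n(n+1)$. Conditionally on $t$, the point $x$ is uniform on the boundary of the cube $[-t,t]^n$. With probability $1/n$ the first coordinate lies on a facet orthogonal to $e_1$, where $x_1^2=t^2$. Otherwise $x_1$ is uniform on $[-t,t]$, giving mean $t^2/3$. Hence $\mathbb E[x_1^2\mid t]\ge t^2/3$, and
\[
1=\int x_1^2\,d\mu\ge \frac{a^2n(n+1)}{3}.
\]
This gives $1/a\ge\sqrt{n(n+1)/3}\ge n/\sqrt3$, which proves the lemma with $c=1/\sqrt3$.

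The only delicate point is the polar decomposition with respect to $\|\cdot\|_\infty$, i.e., the radial law of $t$ and the uniformity of $x$ on the cube surface given $t$. If one prefers to avoid it, an alternative is to compute $\int x_1^2\,d\mu$ directly by integrating $e^{-t/a}$ against the distribution of $x_1$ over the level sets, which gives the same order $a^2n^2$. Other radial choices, such as powers of $|x|$ or of $\|x\|_1$, only give order $\sqrt n$. The cube norm is what makes $|\nabla V|$ of size $1/a\sim n$ rather than $\sqrt n$.
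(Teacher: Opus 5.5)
Your proof is correct. There is no proof in the paper to compare it with: the paper does not prove this lemma and only imports it from \cite[Proposition~5.7]{AAN}. Your cube-norm example gives a self-contained, elementary witness with an explicit constant, where the paper relies on a citation.

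The step you flag as delicate is rigorous. Since $|\nabla\|x\|_\infty|=1$ almost everywhere, the co-area formula gives $dx=d\mathcal H^{n-1}_{\partial[-t,t]^n}\,dt$ with $t=\|x\|_\infty$. This yields both the Gamma law of $t$ and the uniformity of $x$ on $\partial[-t,t]^n$ given $t$.

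You can also bypass it with the direct computation you mention. From $\int_{\{\|x\|_\infty\le s\}}x_1^2\,dx=2^n s^{n+2}/3$ and $Z_a=2^n a^n n!$ one gets $\int x_1^2\,d\mu=a^2(n+1)(n+2)/3$. So the isotropic choice is $a=\sqrt{3/((n+1)(n+2))}$, and $\int|\nabla V|\,d\mu=\sqrt{(n+1)(n+2)/3}$ exactly, consistent with your bound.

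In your example $|\nabla V|$ is constant almost everywhere, so $|\nabla V|$ is of order $n$ on every set of positive measure. This is stronger than the $L^1$ statement. It is the form one would want to back up the closing Remark's claim that the Markov threshold $R$ in Lemma~\ref{lem:energy} cannot be taken below order $n$; the $L^1$ lower bound alone does not exclude small gradients on a set of measure $\frac34$.
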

	
	\subsection{The variation method}
	\
	
	For this subsection assume that $d\mu=e^{-V}dx$ is an even log-concave measure with positive $C^2$ density. If $K\in\mathcal K^n_{+,s}$, define $p(\mu,K)$ as the largest number $p\ge0$ as in \cite{AAN} such that, for every $L\in\mathcal K^n_{+,s}$,
	\begin{equation}\label{eq:localpower}
		\frac{d^2}{d\lambda^2}\mu(\lambda K+(1-\lambda)L)^p\Big|_{\lambda=1}\le0.
	\end{equation}
	
	The following affine invariance is needed; see \cite[Lemma 3.1.]{AAN}.
	
	\begin{lemma}\label{lem:affine}
		Let $T:\R^n\to\R^n$ be invertible. Then
		\[
		p(\mu,K)=p(T_\#\mu,TK)
		\]
		for every even log-concave probability measure $\mu$ with positive $C^2$ density and every $K\in\mathcal K^n_{+,s}$.
	\end{lemma}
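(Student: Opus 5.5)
The statement to prove is Lemma~\ref{lem:affine}: $p(\mu,K)=p(T_\#\mu,TK)$. The idea is that the defining inequality \eqref{eq:localpower} is transported by $T$ without change.

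The key observation is that $T$ commutes with Minkowski combinations:
\[
T(\lambda K+(1-\lambda)L)=\lambda TK+(1-\lambda)TL .
\]
Also, $(T_\#\mu)(TA)=\mu(T^{-1}TA)=\mu(A)$ for every Borel set $A$. Combining the two gives, for every $L$ and every $\lambda$ in a neighbourhood of $1$,
\[
(T_\#\mu)\bigl(\lambda TK+(1-\lambda)TL\bigr)^p=\mu\bigl(\lambda K+(1-\lambda)L\bigr)^p .
\]
These are identical functions of $\lambda$, so their second derivatives at $\lambda=1$ coincide.

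It remains to check that $T$ maps the admissible class onto itself, so that the quantifier "for every $L$" ranges over matching families. This has three parts.

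\begin{itemize}
\item $T$ is linear and invertible, so it maps symmetric convex bodies bijectively to symmetric convex bodies.
\item $T$ preserves the $\mathcal C^2_+$ class. The boundary $T(\partial L)$ is a $C^2$ hypersurface. Its Gauss curvature stays positive because positivity of the second fundamental form is equivalent to local strict convexity with nondegenerate Hessian of a local graph representation, and this is preserved by linear maps: nondegeneracy of the second fundamental form is an affine notion.
\item $T_\#\mu$ is again admissible. It has density $|\det T|^{-1}e^{-V(T^{-1}x)}$, which is positive, $C^2$, even and log-concave, and it is a probability measure when $\mu$ is.
\end{itemize}

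Hence $L\mapsto TL$ is a bijection of $\mathcal K^n_{+,s}$. The set of $p\ge0$ for which \eqref{eq:localpower} holds for all $L$ is therefore the same for $(\mu,K)$ and for $(T_\#\mu,TK)$, and taking the largest such $p$ gives equality.

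The only step needing care is the preservation of $\mathcal C^2_+$ under $T$, which the argument above settles. No other obstacle is expected.
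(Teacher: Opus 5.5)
Your argument is correct. The paper does not prove this lemma itself; it cites \cite[Lemma 3.1]{AAN}, and your transport argument is the standard proof of that fact: $T$ commutes with Minkowski combinations, $(T_\#\mu)(TA)=\mu(A)$, and $L\mapsto TL$ is a bijection of $\mathcal K^n_{+,s}$ that keeps the measure in the admissible class.

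One point deserves to be made explicit. For $\lambda>1$ the body $\lambda K+(1-\lambda)L$ is defined through its support function $\lambda h_K+(1-\lambda)h_L$. Your identity $T(\lambda K+(1-\lambda)L)=\lambda TK+(1-\lambda)TL$ still holds there, because $h_{TM}(u)=h_M(T^{\top}u)$ for the $1$-homogeneous extension. Alternatively, you can compare the one-sided derivatives on $\lambda\le1$ only.
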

	
	The local-to-global principle, see, e.g., in Kolesnikov--Livshyts \cite[Lemma 3.1]{Kolesnikov-Livshyts-2021} or Kolesnikov--Milman \cite[Section 3]{Kolesnikov-Milman-2022}, gives the following relationship.
	
	\begin{lemma}\label{lem:lTog}
		Let $\mu$ be an even log-concave probability measure on $\mathbb{R}^n$ with positive $C^2$ density and $p>0$. Then
		\begin{equation}\label{eq:localglobal}
			\inf_{K\in\mathcal K^n_{+,s}}p(\mu,K)\ge p
		\end{equation}
		if and only if
		\[
		\mu(\lambda K+(1-\lambda)L)^p
		\ge
		\lambda\mu(K)^p+(1-\lambda)\mu(L)^p
		\]
		for all symmetric convex bodies $K,L$ in $\mathbb{R}^n$ and all $\lambda\in[0,1]$. 
	\end{lemma}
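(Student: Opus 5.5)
The plan is to prove the two implications separately, reducing everything to concavity of the one-variable function
\[
f_{K,L}(\lambda)=\mu(\lambda K+(1-\lambda)L)^p .
\]
The key structural fact is that Minkowski combinations are linear in support functions, $h_{\lambda K+(1-\lambda)L}=\lambda h_K+(1-\lambda)h_L$. Combined with the perturbation fact recalled above ($h_K+s\psi$ is the support function of a body in $\mathcal K^n_{+,s}$ for $\psi\in C^2_e(\Sph^{n-1})$ and $|s|$ small), this lets us reparametrize segments of bodies, and extend them slightly past their endpoints, without leaving $\mathcal K^n_{+,s}$. Since $\mu$ has a positive density and $p>0$, we have $\mu(K)>0$ for every body, so $t\mapsto t^p$ is smooth where it is applied.

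\emph{Global $\Rightarrow$ local.} Fix $K,L\in\mathcal K^n_{+,s}$ and put $\psi=h_K-h_L\in C^2_e(\Sph^{n-1})$. For small $\delta>0$ let $M$ be the body with $h_M=h_K+\delta\psi$; it lies in $\mathcal K^n_{+,s}$. Then for $\lambda\in[0,1+\delta]$ the function $h_L+\lambda\psi$ is the support function of a convex combination of $M$ and $L$, namely of $K_\lambda:=\tfrac{\lambda}{1+\delta}M+\bigl(1-\tfrac{\lambda}{1+\delta}\bigr)L$. Convex combinations of two such bodies are again of this form, since $tK_a+(1-t)K_b=K_{ta+(1-t)b}$ at the level of support functions. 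Hence the assumed global inequality, applied to the pairs $K_a,K_b$, shows that $\lambda\mapsto\mu(K_\lambda)^p$ is concave on $[0,1+\delta]$. Because $1$ is an interior point of this interval, we get $\frac{d^2}{d\lambda^2}\mu(\lambda K+(1-\lambda)L)^p\big|_{\lambda=1}\le 0$, i.e. $p(\mu,K)\ge p$.

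\emph{Local $\Rightarrow$ global.} First take $K_0,L_0\in\mathcal K^n_{+,s}$ and set $f(\lambda)=\mu(K_\lambda)^p$ with $K_\lambda=\lambda K_0+(1-\lambda)L_0$. Each $K_\lambda$ lies in $\mathcal K^n_{+,s}$, since Minkowski combinations of $\mathcal C^2_+$ bodies are $\mathcal C^2_+$. Fix $\lambda_0\in[0,1]$ and a small $s>0$, and define $L'$ by $h_{L'}=h_{K_{\lambda_0}}-s(h_{K_0}-h_{L_0})$, which lies in $\mathcal K^n_{+,s}$ for $s$ small. A direct computation with support functions gives
\[
K_{\lambda_0+t}=\Bigl(1+\tfrac ts\Bigr)K_{\lambda_0}-\tfrac ts L' \quad\text{(in the sense of support functions)}.
\]
Therefore $f(\lambda_0+t)=g(1+t/s)$, where $g(\lambda)=\mu(\lambda K_{\lambda_0}+(1-\lambda)L')^p$. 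It follows that $f''(\lambda_0)=s^{-2}g''(1)\le 0$ by the hypothesis $p(\mu,K_{\lambda_0})\ge p$. So $f$ is concave on $[0,1]$, which is exactly the desired inequality for $K_0,L_0$.

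For general symmetric convex bodies $K,L$, approximate them in the Hausdorff metric by bodies in $\mathcal K^n_{+,s}$. Then pass to the limit using the continuity of $\mu$ along Hausdorff-convergent convex bodies: $\mu$ has a density, boundaries of convex bodies are Lebesgue-null, and Minkowski combination is Hausdorff-continuous.

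The main technical obstacle is justifying that $\lambda\mapsto\mu(K_\lambda)$ is $C^2$, so that the pointwise second-derivative condition integrates to concavity. I would handle this by parametrizing $\partial K_\lambda$ via the inverse Gauss map $u\mapsto\nabla h_{K_\lambda}(u)$, which is $C^1$ in $(u,\lambda)$ for $\mathcal C^2_+$ bodies. The first variation is then
\[
\frac{d}{d\lambda}\mu(K_\lambda)=\int_{\Sph^{n-1}}(h_{K_0}-h_{L_0})(u)\,e^{-V(\nabla h_{K_\lambda}(u))}\det\bigl(\Hess h_{K_\lambda}+h_{K_\lambda}\Id\bigr)\,du,
\]
and differentiating once more uses the $C^2$ regularity of the density $e^{-V}$ together with the smooth dependence of the curvature determinant on $\lambda$. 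This gives continuity of $f''$ on $[0,1]$.
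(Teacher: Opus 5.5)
Your proof is correct. The paper gives no proof of this lemma and only cites the local-to-global principle of Kolesnikov--Livshyts and Kolesnikov--Milman; your argument (reparametrizing segments through support functions, using the rescaled body $L'$ to move an arbitrary $\lambda_0$ to the base point $\lambda=1$, extending past the endpoint for the converse, then Hausdorff approximation) is essentially the standard argument behind that citation.

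Two implicit points are worth stating. First, the local condition $\frac{d^2}{d\lambda^2}F^q\le 0$ is equivalent to $FF''+(q-1)(F')^2\le 0$, which is monotone in $q>0$, so $p(\mu,K)\ge p$ really yields the inequality at exponent $p$. Second, twice differentiability of $f$ on $(0,1)$ with $f''\le 0$, not full $C^2$ regularity, already gives concavity.
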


	Let
	\[
	L_Vu=\Delta u-\langle\nabla V,\nabla u\rangle.
	\]
	For $K \in \mathcal{K}_{+, s}^n$, let $\nu_K: \partial K \rightarrow \mathbb{S}^{n-1}$ be the Gauss map of $K$. Let \( \mathcal{H}_{\partial K}^{n-1} \) be the restriction of \( \mathcal{H}^{n-1} \) to the boundary \( \partial K \) and \(d\mu_{\partial K} = e^{-V} d\mathcal{H}_{\partial K}^{n-1}\). For \( K \in \mathcal K^n_{+,s} \) and \(f\in C^1(\partial K)\), it was shown by Kolesnikov and Milman \cite{Kolesnikov-Milman-2018} that  the Neumann boundary problem
	\begin{equation}\label{eq:NP}
		\begin{cases}L_V u=1, & \text { in } K, \\ \langle\nabla u, \nu_K\rangle=f, & \text { on } \partial K\end{cases}
	\end{equation}
	has a unique solution $u\in C^2(\operatorname{int} K)\cap C^1(K)$ up to an
	additive constant if
	\begin{equation}\label{eq:compatibility}
		\int_{\partial K} f d \mu_{\partial K}=\mu(K).
	\end{equation}
	Moreover, if $K$ is symmetric and $V$ and $f$ are even, the solution $u$ is even.
	
	For $K,L\in \mathcal{K}^n_{+,s}$, write $\psi=h_L-h_K$ and $\varphi=\psi\circ \nu_K$.  Then $$h_{\lambda K+(1-\lambda)L}=h_K+(1-\lambda)\psi.$$
	Write $\Pi$ for the second fundamental form of \(\partial K\), and \(H_V = \operatorname{tr}(\Pi) - \langle \nabla V, \nu_K \rangle\) for the weighted mean curvature of \(\partial K\). The following variational formulas were obtained by Kolesnikov and Milman \cite{Kolesnikov-Milman-2018} in their proof of Theorem 6.6.
	\begin{equation}\label{eq:VF}
		\begin{aligned}
			\frac{d}{d\lambda}\mu(\lambda K+(1-\lambda)L)\Big|_{\lambda=1}& = -\int_{\partial K} \varphi  d\mu_{\partial K},\\
			\frac{d^2}{d\lambda^2}\mu(\lambda K+(1-\lambda)L)\Big|_{\lambda=1}& = \int_{\partial K} \big( H_V \varphi^2 - \langle \Pi^{-1} \nabla_{\partial K} \varphi, \nabla_{\partial K} \varphi \rangle \big)  d\mu_{\partial K}.
		\end{aligned}
	\end{equation}
	These formulas generalize the formulas obtained by Colesanti \cite{Colesanti-2008} for Lebesgue measure.
	
	Let $u_{\nu_K}=\langle \nabla u , \nu_K \rangle$. For $u\in C^2(\operatorname{int} K)\cap C^1(K)$ with $u_{\nu_K}\in C^1(\partial K)$, the following weighted Reilly formula was proved by Kolesnikov and Milman \cite{milman2},
	\begin{equation}\label{eq:relliy}
		\begin{aligned}
			\int_K (L_Vu)^2 d \mu&=\int_K\big(\left\|\nabla^2 u\right\|_{\HS}^2+\left\langle\nabla^2 V \nabla u, \nabla u\right\rangle\big) d \mu \\
			& +\int_{\partial K}\left(H_V u_{\nu_K}^2-2\left\langle\nabla_{\partial K} u, \nabla_{\partial K} u_{\nu_K}\right\rangle+\left\langle\Pi \nabla_{\partial K} u, \nabla_{\partial K} u\right\rangle\right) d \mu_{\partial K}.
		\end{aligned}
	\end{equation}

	The following result shown by Kolesnikov and Livshyts \cite[Proof of Lemma 2.3]{Kolesnikov-Livshyts-2021} will be used to estimate $p(\mu,K)$. We include the proof here for completeness.
	
	\begin{theorem}[{\cite{Kolesnikov-Livshyts-2021}}]\label{thm:KM}
		Let $\mu$ be an even log-concave measure on $\mathbb{R}^n$ with positive $C^2$ density $e^{-V}$, $K\in\mathcal K^n_{+,s}$ and $p>0$. Assume that every even $u\in C^2(\operatorname{int} K)$ with $L_Vu=1$ in $K$ satisfies 
		\begin{equation}\label{eq:KMcriterion}
			\frac{1}{\mu(K)}
			\int_K
			\left(\|\Hess u\|_{\HS}^2+
			\langle\Hess V\nabla u,\nabla u\rangle\right)d\mu
			\ge p.
		\end{equation}
		Then $p(\mu,K)\ge p$.
	\end{theorem}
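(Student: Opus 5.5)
Fix $L\in\mathcal K^n_{+,s}$, set $\psi=h_L-h_K$ and $\varphi=\psi\circ\nu_K$, and write $M(\lambda)=\mu(\lambda K+(1-\lambda)L)$. Since
\[
\frac{d^2}{d\lambda^2}M^p=pM^{p-2}\big(MM''+(p-1)(M')^2\big),
\]
the condition \eqref{eq:localpower} is equivalent to
\[
\mu(K)\,M''(1)\le (1-p)\,M'(1)^2 .
\]
By \eqref{eq:VF} this reads
\[
\mu(K)\int_{\partial K}\big(H_V\varphi^2-\langle\Pi^{-1}\nabla_{\partial K}\varphi,\nabla_{\partial K}\varphi\rangle\big)\,d\mu_{\partial K}\le(1-p)\Big(\int_{\partial K}\varphi\,d\mu_{\partial K}\Big)^2 .
\]
The right-hand side is homogeneous of degree two in $\varphi$, and $\varphi=\psi\circ\nu_K$ with $\psi$ an arbitrary even $C^2$ function. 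We may therefore normalize so that $\int_{\partial K}\varphi\,d\mu_{\partial K}=\mu(K)$; the case where this integral vanishes is handled by the same argument or by a limit. The target inequality becomes
\[
\int_{\partial K}\big(H_V\varphi^2-\langle\Pi^{-1}\nabla_{\partial K}\varphi,\nabla_{\partial K}\varphi\rangle\big)\,d\mu_{\partial K}\le(1-p)\,\mu(K).
\]

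With $\varphi$ normalized this way, $f=\varphi$ satisfies the compatibility condition \eqref{eq:compatibility}. Solve the Neumann problem \eqref{eq:NP} with this $f$. The solution $u$ is even because $K$ is symmetric and $V,\varphi$ are even, and $L_Vu=1$, so hypothesis \eqref{eq:KMcriterion} applies to $u$. Apply the Reilly formula \eqref{eq:relliy} to $u$. Its left side is $\int_K(L_Vu)^2\,d\mu=\mu(K)$, and on $\partial K$ we have $u_{\nu_K}=\varphi$. This gives
\[
\mu(K)=\int_K\big(\|\Hess u\|_{\HS}^2+\langle\Hess V\nabla u,\nabla u\rangle\big)\,d\mu+\int_{\partial K}\big(H_V\varphi^2-2\langle\nabla_{\partial K}u,\nabla_{\partial K}\varphi\rangle+\langle\Pi\nabla_{\partial K}u,\nabla_{\partial K}u\rangle\big)\,d\mu_{\partial K}.
\]

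The key pointwise step is to bound the boundary cross term from below. Since $K\in\mathcal K^n_{+,s}$, the form $\Pi$ is positive definite, so for $a=\nabla_{\partial K}u$ and $b=\nabla_{\partial K}\varphi$ Cauchy--Schwarz gives
\[
\langle\Pi a,a\rangle-2\langle a,b\rangle\ge-\langle\Pi^{-1}b,b\rangle .
\]
Inserting this into the Reilly identity yields
\[
\mu(K)\ge\int_K\big(\|\Hess u\|_{\HS}^2+\langle\Hess V\nabla u,\nabla u\rangle\big)\,d\mu+\int_{\partial K}\big(H_V\varphi^2-\langle\Pi^{-1}\nabla_{\partial K}\varphi,\nabla_{\partial K}\varphi\rangle\big)\,d\mu_{\partial K}.
\]
By \eqref{eq:KMcriterion} the interior integral is at least $p\,\mu(K)$. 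Hence the boundary expression is at most $(1-p)\mu(K)$, which is exactly the normalized target inequality. Since $L$ was arbitrary, this gives $p(\mu,K)\ge p$.

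The main technical points are the following.

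First, the reduction from \eqref{eq:localpower} to the quadratic inequality must be done carefully. This includes the sign bookkeeping, the normalization by homogeneity, and the degenerate case $\int_{\partial K}\varphi\,d\mu_{\partial K}=0$, where one must show $M''(1)\le0$ directly. The same Reilly argument should handle it, taking $f=\varphi+t$ with a suitable constant $t$, since constants are even.

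Second, one needs enough regularity to apply Reilly's formula. This requires $u\in C^2(\operatorname{int}K)\cap C^1(K)$ with $u_{\nu_K}=\varphi\in C^1(\partial K)$, and both hold by the Kolesnikov--Milman theory.
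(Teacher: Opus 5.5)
Your proof is correct and is essentially the paper's argument. You use the same normalization of the Neumann data to $\frac{\mu(K)}{\int_{\partial K}\varphi\,d\mu_{\partial K}}\varphi$ so that compatibility holds, Reilly's formula, the pointwise bound $\langle\Pi a,a\rangle-2\langle a,b\rangle\ge-\langle\Pi^{-1}b,b\rangle$, and the hypothesis on $\int_K(\|\Hess u\|_{\HS}^2+\langle\Hess V\nabla u,\nabla u\rangle)\,d\mu$. In the degenerate case $\int_{\partial K}\varphi\,d\mu_{\partial K}=0$, a single fixed constant $t$ is not enough, because the extra terms coming from $H_V(\varphi+t)^2$ survive. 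Your ``limit'' option does work: apply the nondegenerate (homogeneous) inequality to $\varphi+\eps$ and let $\eps\downarrow0$. This is exactly what the paper does, with $\eps\, h_K\circ\nu_K$ in place of the constant $\eps$.
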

	
	\begin{proof}
		Fix $L \in \mathcal{K}_{+, s}^n$ and set $\varphi=(h_L-h_K) \circ \nu_K$. By \eqref{eq:VF} and the chain rule applied to $t \mapsto t^p$, it is enough to prove
		\[
		\begin{aligned}
			0\ge \mu(K)\int_{\partial K}\left(H_V\varphi^2-\langle\Pi^{-1}\nabla_{\partial K}\varphi,\nabla_{\partial K}\varphi\rangle\right)d\mu_{\partial K} +(p-1)\Big(\int_{\partial K}\varphi\,d\mu_{\partial K}\Big)^2.
		\end{aligned}
		\]
		Assume first that \(\int_{\partial K}\varphi\,d\mu_{\partial K}\ne0\). Since $K$ and $L$ are symmetric convex bodies, $\varphi=(h_L-h_K) \circ \nu_K$ is even.   Let $u$ be the even solution of the Neumann problem \eqref{eq:NP} with boundary condition
		\[
		u_{\nu_K}=
		\frac{\mu(K)}{\int_{\partial K}\varphi\,d\mu_{\partial K}}\varphi .
		\]
		The compatibility condition \eqref{eq:compatibility} is satisfied.

		Using \eqref{eq:relliy}, \eqref{eq:KMcriterion}, and the positivity of $\Pi$, we have
		\[
		\begin{aligned}
			\mu(K)=&\int_K (L_Vu)^2 d \mu=\int_K\big(\left\|\nabla^2 u\right\|_{\HS}^2+\left\langle\nabla^2 V \nabla u, \nabla u\right\rangle\big) d \mu\\
			&+\int_{\partial K}\left(H_V u_{\nu_K}^2-2\left\langle\nabla_{\partial K} u, \nabla_{\partial K} u_{\nu_K}\right\rangle+\left\langle\Pi \nabla_{\partial K} u, \nabla_{\partial K} u\right\rangle\right) d \mu_{\partial K}\\
			\ge& p\mu(K)+ 
			\int_{\partial K}\left(H_Vu_{\nu_K}^2-\langle\Pi^{-1}\nabla_{\partial K}u_{\nu_K},\nabla_{\partial K}u_{\nu_K}\rangle\right)d\mu_{\partial K}\\
			=&p\mu(K)+
			\Big(\frac{\mu(K)}{\int_{\partial K}\varphi\,d\mu_{\partial K}}\Big)^2\cdot 
			\int_{\partial K}\left(H_V\varphi^2-\langle\Pi^{-1}\nabla_{\partial K}\varphi,\nabla_{\partial K}\varphi\rangle\right)d\mu_{\partial K}.
		\end{aligned}
		\]
		This is exactly the desired inequality.

		When \(\int_{\partial K}\varphi\,d\mu_{\partial K}=0\), let
		\[
		\varphi_\eps=\varphi+\eps h_K\circ\nu_K,\quad \eps>0.
		\]
		Then \[\int_{\partial K}\varphi_\eps\,d\mu_{\partial K}=\eps \int_{\partial K}h_K\circ\nu_K\,d\mu_{\partial K}> 0.\] By the previous case, we have
		\[
		\begin{aligned}
			0\ge \mu(K)\int_{\partial K}\left(H_V\varphi_\eps^2-\langle\Pi^{-1}\nabla_{\partial K}\varphi_\eps,\nabla_{\partial K}\varphi_\eps\rangle\right)d\mu_{\partial K} +(p-1)\Big(\int_{\partial K}\varphi_\eps\,d\mu_{\partial K}\Big)^2.
		\end{aligned}
		\]
		Letting $\eps \downarrow 0$, this gives the desired inequality.
	\end{proof}

	\vskip 20pt
	
	\section{\bf Proof of the main result}\label{sec:proof}
	\vskip 5pt
	
	We first give a new estimate for $\int\|\Hess u\|_{\HS}^2\,d\mu$, which allows us to obtain the new uniform bound in Theorem \ref{main}. 
	
	\begin{lemma}\label{lem:energy}
		Let $d\mu=e^{-V}dx$ be an even isotropic log-concave probability measure on $\R^n$, $n\ge2$, whose support is a $K\in\mathcal K^n_{+,s}$, and let  $\vartheta_\mu$ be the Poincar\'e constant of $\mu$. If $u\in C^2(\operatorname{int} K)$ is  even and satisfies
		\[
		L_Vu:=\Delta u-\langle\nabla V,\nabla u\rangle=1,\quad\text{in } K,
		\]
		then
		\begin{equation}\label{eq:energy-theta}
			\int_K\|\Hess u\|_{\HS}^2\,d\mu
			\ge
			\frac{c}{n+n^2\vartheta_\mu^2}.
		\end{equation}
		Consequently,
		\begin{equation}\label{eq:energy-log}
			\int_K\|\Hess u\|_{\HS}^2\,d\mu
			\ge
			\frac{c}{n^2\ln n}.
		\end{equation}
	\end{lemma}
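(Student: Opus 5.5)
The plan is to compare the equation $L_Vu=1$ with pointwise bounds on $\Delta u$ and $\langle\nabla V,\nabla u\rangle$. We integrate only over the set where $|\nabla V|$ is controlled, but we control $\nabla u$ through the Poincaré inequality on all of $K$. We may assume $\int_K\|\Hess u\|_{\HS}^2\,d\mu<\infty$, since otherwise there is nothing to prove. Write $E:=\int_K\|\Hess u\|_{\HS}^2\,d\mu$.

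\emph{Step 1 (gradient energy via Poincaré on the whole support).} Since $u$ is even, each partial derivative $\partial_i u$ is odd. Because $\mu$ is even, $\int_K\partial_iu\,d\mu=0$. Applying \eqref{eq:poincare} to $f=\partial_iu$ and summing over $i$ gives
\[
\int_K|\nabla u|^2\,d\mu\le\vartheta_\mu^2\sum_{i}\int_K|\nabla\partial_iu|^2\,d\mu=\vartheta_\mu^2E .
\]
Technically, $\partial_iu$ is only $C^1$ in the interior, so one either approximates (e.g.\ by $u$ on slightly shrunken bodies $(1-\delta)K$, or by truncations) or notes that the Poincaré inequality extends to the Sobolev space $W^{1,2}(\mu)$. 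I expect this approximation to be the only technical point. No convexity of any auxiliary set is needed here, because Poincaré is applied on all of $K$.

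\emph{Step 2 (low-gradient set).} By Lemma~\ref{thm:EK} and Markov's inequality, the set $A=\{x\in K:|\nabla V(x)|\le M\}$ with $M=2Cn$ satisfies $\mu(A)\ge 1/2$. We integrate the identity $1=\Delta u-\langle\nabla V,\nabla u\rangle$ over $A$ and use $|\Delta u|\le\sqrt n\,\|\Hess u\|_{\HS}$ (Cauchy--Schwarz on the trace) together with $|\langle\nabla V,\nabla u\rangle|\le M|\nabla u|$ on $A$. This gives
\[
\tfrac12\le\mu(A)\le\sqrt n\int_K\|\Hess u\|_{\HS}\,d\mu+2Cn\int_K|\nabla u|\,d\mu\le\sqrt n\,E^{1/2}+2Cn\vartheta_\mu E^{1/2},
\]
where the last step uses Cauchy--Schwarz with $\mu(K)=1$ and Step 1.

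\emph{Step 3 (conclusion).} Rearranging Step 2 yields $E\ge \frac{1}{4(\sqrt n+2Cn\vartheta_\mu)^2}\ge \frac{c}{n+n^2\vartheta_\mu^2}$, which is \eqref{eq:energy-theta}. For \eqref{eq:energy-log}, insert Theorem~\ref{thm:KLS}, i.e.\ $\vartheta_\mu^2\le C\ln n$. Then $n+n^2\vartheta_\mu^2\le C'n^2\ln n$ for $n\ge2$, which gives the claim.
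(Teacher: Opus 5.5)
Your proposal is correct and follows essentially the same argument as the paper. It applies Poincar\'e to the odd functions $\partial_i u$ on all of $K$ to get $\int_K|\nabla u|^2\,d\mu\le\vartheta_\mu^2E$, and then integrates $L_Vu=1$ over the Markov set from Lemma~\ref{thm:EK} on which $|\nabla V|$ is of order $n$. The differences are only cosmetic: you use threshold $2Cn$ with $\mu(A)\ge\frac12$ instead of $4Cn$ with $\mu(A)\ge\frac34$, and you bound integrals over $A$ directly by integrals over $K$. Your remark on justifying Poincar\'e for $\partial_iu$, which is only $C^1$ in $\operatorname{int}K$ (e.g.\ via dilations $x\mapsto(1-\delta)x$), addresses a point the paper passes over silently.
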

	
	\begin{proof}
		Let
		\[
		E=\int_K\|\Hess u\|_{\HS}^2\,d\mu.
		\]
		By Lemma \ref{thm:EK}, there is an absolute constant $C>0$ such that
		\[
		\int_K|\nabla V|\,d\mu\le Cn.
		\]
		Set
		\[
		A=\{x\in K:\ |\nabla V(x)|\le 4Cn\}.
		\]
		By Markov's inequality,
		$$1-\mu(A)=\mu(\{x\in K:\ |\nabla V(x)|> 4Cn\}) \le  \frac{1}{4Cn}\int_K|\nabla V|\,d\mu\le \frac{1}{4}.$$
		So we have $\mu(A)\ge \frac{3}{4}$.
		
		Since both $u$ and $\mu$ are even, each $\partial_i u$ is odd and $\int_K\partial_i u\,d\mu=0$ for $i=1,2,\ldots,n$. Applying the Poincar\'e inequality \eqref{eq:poincare} to $\partial_i u$ and summing over $i=1,\ldots,n$, we obtain
		\begin{equation}\label{eq:grad-hess}
			\int_K|\nabla u|^2\,d\mu
			=\sum_{i=1}^n\int_K(\partial_i u)^2\,d\mu
			\le
			\vartheta_\mu^2
			\sum_{i=1}^n\int_K|\nabla\partial_i u|^2\,d\mu
			=
			\vartheta_\mu^2E.
		\end{equation}
		
		Integrating the equation $L_Vu=1$ over $A$ gives
		\begin{equation*}
			\mu(A)=\int_A\Delta u\,d\mu-\int_A\langle\nabla V,\nabla u\rangle\,d\mu.
		\end{equation*}
		By Cauchy--Schwarz inequality and that $(\Delta u)^2\le n\|\Hess u\|_{\HS}^2$, we have
		\begin{equation*}
			\begin{aligned}
				\big|\int_A\Delta u\,d\mu\big|
				\le
				\mu(A)^{\frac{1}{2}}\Big(\int_A(\Delta u)^2\,d\mu\Big)^{\frac{1}{2}}\le \mu(A)^{\frac{1}{2}}\Big(\int_K(\Delta u)^2\,d\mu\Big)^{\frac{1}{2}}
				\le 
				\sqrt n\,\mu(A)^{\frac{1}{2}}E^{\frac{1}{2}}.
			\end{aligned}
		\end{equation*}
		By the definition of $A$, we have $|\nabla V|\le4Cn$ on $A$. This, together with Cauchy--Schwarz inequality and \eqref{eq:grad-hess} yields that 
		\begin{equation*}
			\begin{aligned}
				\big|\int_A\langle\nabla V,\nabla u\rangle\,d\mu\big|
				&\le
				4Cn\int_A|\nabla u|\,d\mu\le 4Cn\mu(A)^{\frac{1}{2}}\Big(\int_A|\nabla u|^2\,d\mu\Big)^{\frac{1}{2}}\\
				&\le 4Cn\mu(A)^{\frac{1}{2}}\Big(\int_K|\nabla u|^2\,d\mu\Big)^{\frac{1}{2}}
				\le
				4Cn\,\vartheta_\mu\,\mu(A)^{\frac{1}{2}}E^{\frac{1}{2}}.
			\end{aligned}
		\end{equation*}
		Combining the preceding estimates, we have
		\[
		\mu(A)\le \big|\int_A\Delta u\,d\mu\big|+\big|\int_A\langle\nabla V,\nabla u\rangle\,d\mu\big|\le 
		\mu(A)^{\frac{1}{2}}\big(\sqrt n+4Cn\vartheta_\mu\big)E^{\frac{1}{2}}.
		\]
		Since $\mu(A)\ge\frac{3}{4}$,
		\[
		E\ge
		\frac{3}{4(\sqrt n+4Cn\vartheta_\mu)^2}
		\ge
		\frac{c}{n+n^2\vartheta_\mu^2}.
		\]
		This proves \eqref{eq:energy-theta}. The bound \eqref{eq:energy-log} follows from Theorem \ref{thm:KLS}.
	\end{proof}

	\begin{proof}[Proof of Theorem \ref{main}]
		We divide the proof into three steps. We first prove that \begin{equation}\label{eq:target-local}
			p(\mu,K)\ge \frac{c}{n^2\ln n}
		\end{equation}
		for every $K\in\mathcal K^n_{+,s}$ when $\mu$ is an even log-concave measure on $\mathbb{R}^n$ with positive $C^2$ density. This is done in Step 1 and Step 2. Then we prove the general case in Step 3 by approximation.
		
		\textbf{Step 1.} Isotropic normalization of the restricted measure.
		
		Fix $K\in\mathcal K^n_{+,s}$ and consider the normalized restriction $\mu_K$ defined by
		\begin{equation}\label{eq:restricted-measure}
			d\mu_K(x)=\frac{{\bf 1}_K(x)e^{-V(x)}}{\mu(K)}\,dx.
		\end{equation}
		This is an even log-concave probability measure. Since $K$ has nonempty interior and $e^{-V}$ is positive, $\Cov(\mu_K)$ is positive definite. Let $T=\Cov(\mu_K)^{-\frac{1}{2}}$ and set
		\[
		\nu=T_\#\mu_K,\quad \widetilde K=TK.
		\]
		Then $\nu$ is even, isotropic, log-concave, and supported on $\widetilde K$ and $\widetilde K\in\mathcal K^n_{+,s}$.
		Let 
		\[
		\widetilde{V}(y)=V\left(T^{-1} y\right)+\ln |\operatorname{det} T| \quad\text{and}\quad \widetilde\mu=T_\#\mu.
		\]
		Then $d \widetilde{\mu}=e^{-\widetilde{V}} d x$. By Lemma \ref{lem:affine}, we have
		\begin{equation}\label{eq:affine-step}
			p(\mu,K)=p(\widetilde\mu,\widetilde K).
		\end{equation}
		By the definition of $\nu$, it is the normalized restriction of $\widetilde\mu$ to $\widetilde K$, i.e.,
		\begin{equation}\label{eq:nu-restriction}
			\nu=\frac{{\bf 1}_{\widetilde K}\widetilde\mu}{\widetilde\mu(\widetilde K)}.
		\end{equation}
		
		\textbf{Step 2.} The local variation estimate.
		
		Set
		\begin{equation}\label{eq:potentials-same}
			W=\widetilde V+\ln\widetilde\mu(\widetilde K)
			\quad\hbox{in }\widetilde K
		\end{equation}
		and $W=+\infty$ otherwise.
		From \eqref{eq:nu-restriction}, we have  $d\nu=e^{-W}dx$ in $\widetilde K$.
		Let $u\in C^2(\operatorname{int} \widetilde K)$ be an arbitrary even function satisfying
		\[
		L_{\widetilde V}u=1
		\quad\hbox{in }\widetilde K.
		\]
		By \eqref{eq:potentials-same}, this is the same equation as $L_Wu=1$ in $\widetilde K$. Recall that $\nu$ is even, isotropic, log-concave and supported on $\widetilde K$. Lemma \ref{lem:energy} gives
		\[
		\int_{\widetilde K}\|\Hess u\|_{\HS}^2\,d\nu
		\ge
		\frac{c}{n^2\ln n}.
		\]
		Since $\widetilde V$ is convex and $C^2$, $\Hess\widetilde V$ is positive semidefinite, and therefore $\langle\Hess\widetilde V\nabla u,\nabla u\rangle\ge 0$. Hence
		\[
		\begin{aligned}
			&\frac{1}{\widetilde\mu(\widetilde K)}
			\int_{\widetilde K}
			\big(
			\|\Hess u\|_{\HS}^2+
			\langle\Hess\widetilde V\nabla u,\nabla u\rangle
			\big)d\widetilde\mu\\
			\ge&
			\frac{1}{\widetilde\mu(\widetilde K)}
			\int_{\widetilde K}
			\|\Hess u\|_{\HS}^2
			d\widetilde\mu
			=
			\int_{\widetilde K}\|\Hess u\|_{\HS}^2\,d\nu
			\ge
			\frac{c}{n^2\ln n}.
		\end{aligned}
		\]
		Then Theorem \ref{thm:KM} implies that
		\[
		p(\widetilde\mu,\widetilde K)
		\ge
		\frac{c}{n^2\ln n}.
		\]
		This, together with \eqref{eq:affine-step}, gives \eqref{eq:target-local}.
		
		By Lemma \ref{lem:lTog},  \eqref{eq:target-local} gives \eqref{eq:mainBM} for even log-concave measures on $\mathbb{R}^n$ with positive $C^2$ density and for symmetric convex bodies.
		
		\textbf{Step 3.} Approximation and globalization.
		
		We first remove the regularity assumption on the density. Let $\gamma_\eps$ be the centered Gaussian measure with covariance $\eps^2\Id$ and density $\varphi_\eps$,  and $\mu_\eps=\mu*\gamma_\eps$. Since $d\mu=e^{-V}dx$, $\mu_\eps$ has density
		\[
		f_\eps(x)=\int_{\R^n}e^{-V(y)}\varphi_\eps(x-y)\,dy .
		\]
		Theorem \ref{thm:prekopa} shows that $f_\eps$ is log-concave. Since convolution with a Gaussian gives a $C^\infty$ positive density, and both  $\mu$ and $\gamma_\eps$ are even, $\mu_\eps$ is an even log-concave probability measure with a $C^\infty$ density. 
		Therefore, for all symmetric convex bodies $K,L$ in $\mathbb{R}^n$ and all $\lambda\in[0,1]$
		\begin{equation}\label{eq:mue}
			\mu_\eps(\lambda K+(1-\lambda)L)^{p_n}
			\ge
			\lambda\mu_\eps(K)^{p_n}+(1-\lambda)\mu_\eps(L)^{p_n}.
		\end{equation}
		
		Since $\gamma_\eps$ converges weakly to the Dirac mass $\delta_0$, $\mu_\eps$ converges weakly to $\mu$.
		So for all symmetric convex bodies $K,L$ in $\mathbb{R}^n$ and all $\lambda\in[0,1]$, 
		\[
		\mu_\eps(\lambda K+(1-\lambda)L)\to\mu(\lambda K+(1-\lambda)L)\quad \text{as}\ \eps\to0.
		\]
		Then letting $\eps\to0$, \eqref{eq:mue} gives \eqref{eq:mainBM}.
		
		It remains to pass from convex bodies to convex sets. For $R>0$ and $\delta>0$, set
		\[
		K_{R,\delta}=(K\cap RB_2^n)+\delta B_2^n,\qquad
		L_{R,\delta}=(L\cap RB_2^n)+\delta B_2^n .
		\]
		These are symmetric convex bodies, so the already proved inequality applies to $K_{R,\delta}$ and $L_{R,\delta}$. Letting $\delta \downarrow 0$ gives the target inequality for $K\cap RB_2^n$ and $L\cap RB_2^n$. Finally, as $R\to\infty$, by the monotone convergence theorem, for all symmetric convex sets $K,L$ in $\mathbb{R}^n$ and all $\lambda\in[0,1]$,
		\[
		\mu(\lambda(K\cap RB_2^n)+(1-\lambda)(L\cap RB_2^n))\to \mu(\lambda K+(1-\lambda) L).
		\]
		Then letting $R\to\infty$, we obtain \eqref{eq:mainBM}.
	\end{proof}
	
	\begin{remark}
		The proof above yields a uniform lower bound of order \(\frac{1}{n^2\ln n}\). The bound comes from the estimates
		\[
		\int_{\operatorname{supp} \mu}|\nabla V|\,d\mu\leq Cn
		\quad\text{and}\quad
		\vartheta_\mu\leq C\sqrt{\ln n},
		\]
		where $\mu$ is an even, isotropic, log-concave probability measure.
		
		By Lemma~\ref{optimal}, the order \(n\) in the first estimate is optimal. Consequently, using only this estimate and Markov's inequality, one	can guarantee a set of fixed positive measure on which	\(|\nabla V|\leq R\) only with \(R\) of order \(n\). Since the denominator in	Lemma~\ref{lem:energy} is
		\[
		\bigl(\sqrt n+R\vartheta_\mu\bigr)^2,
		\]
		even a dimension-free bound for \(\vartheta_\mu\), as predicted by the KLS	conjecture, would improve lower bound only to order \(\frac{1}{n^2}\).
		Therefore, any improvement beyond the order \(\frac{1}{n^2}\), and in particular the conjectural order \(\frac{1}{n}\), requires new ideas.
	\end{remark}

	
	\vskip3pt {\bf Data Availability}: Not applicable.


\vskip 30pt
	
	\begin{thebibliography}{99}
		\vskip 10pt
		
		\bibitem{Aishwarya-Li-2025}
		G. Aishwarya and D. Li, Entropic and functional forms of the dimensional Brunn--Minkowski inequality in Gauss space, Math. Ann. 393 (2025), 3025--3042.
		
		\bibitem{Aishwarya-Rotem-2026}
		G. Aishwarya and L. Rotem, New Brunn--Minkowski and functional inequalities via convexity of entropy, Adv. Math. 490 (2026), Paper No. 110841, 33 pp.
		
		\bibitem{Barthe}
		F. Barthe, The Brunn--Minkowski theorem and related geometric and functional inequalities, in \emph{International Congress of Mathematicians. Vol. II}, Eur. Math. Soc., Z\"urich, 2006, pp. 1529--1546.
		
		\bibitem{Bobkov1}
		S. Bobkov and M. Ledoux, From Brunn--Minkowski to sharp Sobolev inequalities, Ann. Mat. Pura Appl. 187 (2008), 369--384.
		
		\bibitem{Bobkov2}
		S. Bobkov and M. Ledoux, From Brunn--Minkowski to Brascamp--Lieb and to logarithmic Sobolev inequalities, Geom. Funct. Anal. 10 (2000), 1028--1052.
		
		\bibitem{BLYZ12}
		K. B\"or\"oczky, E. Lutwak, D. Yang and G. Zhang, The log-Brunn--Minkowski inequality, Adv. Math. 231 (2012), 1974--1997.
		
		\bibitem{Buser1982ASENP}
		P. Buser, A note on the isoperimetric constant, Ann. Sci. \'Ecole Norm. Sup. (4) 15 (1982), 213--230.
		
		\bibitem{CLM17}
		A. Colesanti, G. Livshyts and A. Marsiglietti, On the stability of Brunn--Minkowski type inequalities, J. Funct. Anal. 273 (2017), 1120--1139.
		
		\bibitem{Colesanti-2008}
		A. Colesanti, From the Brunn--Minkowski inequality to a class of Poincar\'e-type inequalities, Commun. Contemp. Math. 10 (2008), 765--772.
		
		\bibitem{Cordero-Erausquin-Eskenazis-preprint}
		D. Cordero-Erausquin and A. Eskenazis, Concavity principles for weighted marginals, arXiv:2506.16941 (2025).
		
		\bibitem{Cordero-Rotem-2023}
		D. Cordero-Erausquin and L. Rotem, Improved log-concavity for rotationally invariant measures of symmetric convex bodies, Ann. Probab. 51 (2023), 987--1003.
		
		\bibitem{Eldan-Klartag-2008}
		R. Eldan and B. Klartag, Pointwise estimates for marginals of convex bodies, J. Funct. Anal. 254 (2008), 2275--2293.
		
		\bibitem{AAN}
		A. Eskenazis, A. Giannopoulos and N. Tziotziou, Functional perimeter and the dimensional Brunn--Minkowski inequality for log-concave measures, arXiv:2605.02747 (2026).
		
		\bibitem{Eskenazis-Moschidis-2021}
		A. Eskenazis and G. Moschidis, The dimensional Brunn--Minkowski inequality in Gauss space, J. Funct. Anal. 280 (2021), Paper No. 108914, 19 pp.
		
		\bibitem{Gardner-2002}
		R. Gardner, The Brunn--Minkowski inequality, Bull. Amer. Math. Soc. (N.S.) 39 (2002), 355--405.
		
		\bibitem{Gardner-2006}
		R. Gardner, \emph{Geometric Tomography}, second ed., Cambridge University Press, Cambridge, 2006.
		
		\bibitem{Gardner-Zvavitch-2010}
		R. Gardner and A. Zvavitch, Gaussian Brunn--Minkowski inequalities, Trans. Amer. Math. Soc. 362 (2010), 5333--5353.
		
		\bibitem{KLS}
		R. Kannan, L. Lov\'asz and M. Simonovits, Isoperimetric problems for convex bodies and a localization lemma, Discrete Comput. Geom. 13 (1995), 541--559.
		
		\bibitem{Klartag-2023}
		B. Klartag, Logarithmic bounds for isoperimetry and slices of convex sets, Ars Inveniendi Analytica (2023), Paper No. 4, 17 pp.
		
		\bibitem{KL2025BAMS}
		B. Klartag and J. Lehec, Isoperimetric inequalities in high-dimensional convex sets, Bull. Amer. Math. Soc. (N.S.) 62 (2025), 575--642.
		
		\bibitem{Kolesnikov-Livshyts-2021}
		A. Kolesnikov and G. Livshyts, On the Gardner--Zvavitch conjecture: symmetry in inequalities of Brunn--Minkowski type, Adv. Math. 384 (2021), Paper No. 107689, 23 pp.
		
		\bibitem{milman2}
		A. Kolesnikov and E. Milman, Brascamp--Lieb-type inequalities on weighted Riemannian manifolds with boundary, J. Geom. Anal. 27 (2017), 1680--1702.
		
		\bibitem{Kolesnikov-Milman-2018}
		A. Kolesnikov and E. Milman, Poincar\'e and Brunn--Minkowski inequalities on the boundary of weighted Riemannian manifolds, Amer. J. Math. 140 (2018), 1147--1185.
		
		\bibitem{Kolesnikov-Milman-2022}
		A. Kolesnikov and E. Milman, Local $L^p$-Brunn--Minkowski inequalities for $p<1$, Mem. Amer. Math. Soc. 277 (2022), no. 1360, v+78 pp.
		
		\bibitem{Ledoux1994PAMS}
		M. Ledoux, A simple analytic proof of an inequality by P. Buser, Proc. Amer. Math. Soc. 121 (1994), 951--959.
		
		\bibitem{Livshyts-2023}
		G. Livshyts, A universal bound in the dimensional Brunn--Minkowski inequality for log-concave measures, Trans. Amer. Math. Soc. 376 (2023), 6663--6680.
		
		\bibitem{LMNZ17}
		G. Livshyts, A. Marsiglietti, P. Nayar and A. Zvavitch, On the Brunn--Minkowski inequality for general measures with applications to new isoperimetric-type inequalities, Trans. Amer. Math. Soc. 369 (2017), 8725--8742.
		
		\bibitem{MMRR-preprint}
		A. Malliaris, J. Melbourne, C. Roberto and M. Roysdon, Functional liftings of restricted geometric inequalities, arXiv:2508.15247 (2025).
		
		\bibitem{milman2009invent}
		E. Milman, On the role of convexity in isoperimetry, spectral-gap and concentration, Invent. Math. 177 (2009), 1--43.
		
		\bibitem{Prekopa}
		A. Pr\'ekopa, Logarithmic concave measures with application to stochastic programming, Acta Sci. Math. (Szeged) 32 (1971), 301--316.
		
		\bibitem{Saroglou2015}
		C. Saroglou, Remarks on the conjectured log-Brunn--Minkowski inequality, Geom. Dedicata 177 (2015), 353--365.
		
		\bibitem{Saroglou2016}
		C. Saroglou, More on logarithmic sums of convex bodies, Mathematika 62 (2016), 818--841.
		
		\bibitem{Schneider-book}
		R. Schneider, \emph{Convex Bodies: The Brunn--Minkowski Theory}, second expanded ed., Cambridge University Press, Cambridge, 2014.
		
	\end{thebibliography}
\end{document}